\newtheorem{thm}{Theorem}
\newtheorem{cor}[thm]{Corollary}
\theoremstyle{definition}
\newcommand{\R}{\mathbb R}
\newcommand{\Z}{\mathbb Z}
\newcommand{\pd}{\partial}
\DeclareMathOperator{\tr}{Tr}
\DeclareMathOperator{\id}{id}
\begin{document}

\title{Axioms for the Lefschetz number as a lattice valuation}
\author{P. Christopher Staecker}
\address{Dept. of Mathematics and Computer Science, Fairfield University, Fairfield CT, 06824, USA}
\email{cstaecker@fairfield.edu}
\keywords{lefschetz number, euler characteristic, Hadwiger's theorem, Fixed points, valuation}
\subjclass[2010]{55M20, 52B45}
\maketitle

\begin{abstract}
We give new axioms for the Lefschetz number based on Hadwiger's characterization of the Euler characteristic as the unique lattice valuation on polyhedra which takes value 1 on simplices. In the setting of maps on abstract simplicial complexes, we show that the Lefschetz number is unique with respect to a valuation axiom and an axiom specifying the value on a simplex. These axioms lead naturally to the classical computation of the Lefschetz number as a trace in homology. We then extend this approach to continuous maps of polyhedra, assuming an extra homotopy invariance axiom. We also show that this homotopy axiom can be weakened.
\end{abstract}

\section{Introduction}
Several papers in recent years have focused on characterizing the Lefschetz number and related invariants as the unique real valued function satisfying a set of natural axioms. Two notable recent approaches are by Arkowitz and Brown \cite{ab04}, and Furi, Pera, and Spadini \cite{fps04}, both from 2004. The setting of \cite{ab04} is for continuous selfmaps on compact polyhedra, which is more general than \cite{fps04}, which requires  differentiable manifolds, but the axioms of \cite{fps04} are simpler. The axioms of \cite{fps04} actually characterize the local fixed point index, a more general invariant which is a localized version of the Lefschetz number. The approach of \cite{ab04} is generalized in \cite{gw06} to the equivariant Lefschetz number and the Reidemeister trace, and the approach of \cite{fps04} is generalized in \cite{stae07a, stae09b, gs12} to coincidence theory and the Reidemeister trace on topological manifolds.

The two papers \cite{ab04} and \cite{fps04} use two separate axiom schema: each of the two approaches use a homotopy invariance axiom, some sort of additivity property, and a property describing specifically the value of the invariant for certain simple spaces and maps. In \cite{ab04}, an additional commutativity axiom is assumed.

The approach in \cite{ab04} directly generalizes a classical characterization of the Euler characteristic by Watts \cite{watt62} in 1962. The Lefschetz number of the identity map is always equal to the Euler characteristic, and so it is natural, when characterizing the Lefschetz number, to attempt generalizations of characterizations for the Euler characteristic. 

In this paper we generalize another characterization of the Euler characteristic in the context of lattice valuations by Hadwiger \cite{hadw57} in the 1950s. Hadwiger showed:
\begin{thm}
Let $X$ be any simplicial complex, and let $C(X)$ be the distributive lattice of all subcomplexes of $X$. Then the Euler characteristic is the unique function $\chi:C(X) \to \R$ satisfying the following two properties:
\begin{enumerate}
\item (Valuation) Let $A,B$ be subcomplexes of some common subdivision $X$. Then $\chi(\emptyset) = 0$, and 
\[ \chi(A\cup B) = \chi(A) + \chi(B) - \chi(A\cap B). \]
\item If $x$ is a simplex, then $\chi(x) = 1$.
\end{enumerate}
\end{thm}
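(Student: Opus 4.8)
The plan is to prove existence and uniqueness separately, uniqueness being the substantive half. For existence, recall that for a finite complex $A$ the Euler characteristic is computed by $\chi(A) = \sum_{k\ge 0}(-1)^k n_k(A)$, where $n_k(A)$ is the number of $k$-simplices of $A$. The $k$-simplices of $A\cup B$ are the union of those of $A$ and those of $B$, and those of $A\cap B$ are their intersection, so inclusion--exclusion for finite sets gives $n_k(A\cup B) = n_k(A)+n_k(B)-n_k(A\cap B)$; multiplying by $(-1)^k$ and summing yields the valuation identity, and $\chi(\emptyset)=0$ is immediate. If $x$ consists of a $d$-simplex and all its faces then $n_k(x) = \binom{d+1}{k+1}$, whence $\chi(x) = \sum_{k=0}^d(-1)^k\binom{d+1}{k+1} = 1$ by the binomial theorem.

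For uniqueness, suppose $\psi:C(X)\to\R$ also satisfies (1) and (2); I will show $\psi = \chi$ by induction on the number of simplices of $A\in C(X)$. If $A=\emptyset$, then $\psi(A)=0=\chi(A)$ by (1). Otherwise choose a simplex $\sigma$ of $A$ maximal under inclusion, let $\bar\sigma\in C(X)$ be $\sigma$ together with all its faces, let $\pd\sigma = \bar\sigma\setminus\{\sigma\}$ be its boundary complex, and put $A' = A\setminus\{\sigma\}$. Maximality of $\sigma$ guarantees that no simplex of $A'$ has $\sigma$ as a face, so $A'$ is a subcomplex, and then $A = A'\cup\bar\sigma$ with $A'\cap\bar\sigma = \pd\sigma$, since every proper face of $\sigma$ lies in $A'$ but $\sigma$ itself does not; both $A'$ and $\pd\sigma$ have strictly fewer simplices than $A$. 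Applying axiom (1) to $A = A'\cup\bar\sigma$ and axiom (2) to $\bar\sigma$,
\[ \psi(A) = \psi(A') + \psi(\bar\sigma) - \psi(\pd\sigma) = \psi(A') + 1 - \psi(\pd\sigma), \]
and the inductive hypothesis gives $\psi(A') = \chi(A')$ and $\psi(\pd\sigma) = \chi(\pd\sigma)$. Since $\chi$ satisfies (1) and (2) by the existence part, the same computation yields $\chi(A) = \chi(A') + 1 - \chi(\pd\sigma)$, so $\psi(A) = \chi(A)$ and the induction closes.

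The argument is essentially soft; the point that genuinely matters is the use of a \emph{maximal} simplex $\sigma$. This is exactly what keeps $A' = A\setminus\{\sigma\}$ closed under faces, and what forces $A'\cap\bar\sigma$ to be precisely the boundary complex $\pd\sigma$ --- which has fewer simplices and is therefore handled by the induction. The remaining things to verify are routine: that every complex appearing lies in $C(X)$ (or in a common subdivision), so that the valuation axiom is applicable; the degenerate case where $\sigma$ is a vertex and $\pd\sigma = \emptyset$; and that the induction, like the Euler characteristic itself, presupposes finiteness of the complexes.
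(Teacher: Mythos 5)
Your proof is correct. Note, though, that the paper does not actually prove this theorem: it states it as Hadwiger's result with citations to Hadwiger and to Klain, and the technical input it extracts for later use is Theorem~\ref{simplicesunique} (from Klain--Rota), which says that a lattice valuation on $S(X)$ is uniquely determined by, and can be freely assigned, its values on simplices. The in-paper route to Hadwiger's theorem would therefore be a two-liner: uniqueness from Theorem~\ref{simplicesunique} plus axiom~(2), existence because the Euler characteristic is a valuation taking value $1$ on simplices. What you have done instead is give a self-contained proof by induction on the number of faces, peeling off a maximal face $\sigma$ so that $A = A' \cup \bar\sigma$ with $A' \cap \bar\sigma = \pd\sigma$; this is, in effect, a from-scratch proof of the uniqueness half of Theorem~\ref{simplicesunique} specialized to this situation, together with a direct verification that $\chi$ satisfies both axioms. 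The trade-off is clear: your argument is elementary and self-contained, whereas the paper's route defers to the lattice-valuation machinery it is about to reuse verbatim in the proof of Theorem~\ref{abstractunique}. Your induction is the standard one and is sound --- the only points that deserve the explicit mention you give them are that maximality of $\sigma$ keeps $A'$ closed under faces and forces $A' \cap \bar\sigma = \pd\sigma$, and the base/degenerate cases $A=\emptyset$ and $\pd\sigma=\emptyset$.
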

The above is the dimension zero case of Hadwiger's Theorem, which further characterizes all maps on polyconvex bodies in $\R^n$ satisfying the valuation property. See \cite{klai95} for a more recent proof in English. The lattice valuation approach was recently adapted by Pedrini in \cite{pedr12} to give another characterization of the Euler characteristic.

Note that in Hadwiger's theorem and in \cite{fps04}, the invariant is proved to be unique among all real-valued functions obeying the axioms, and it is proved as a consequence of the axioms that it is in fact always an integer. This point of view is carried further in \cite{gs12}, Section 5, where it is assumed only that the values for the index lie in any Abelian group, and then it is proved using the axioms that in fact the values lie in a subgroup isomorphic to (some quotient of) $\Z$. This generality in the values is not present in \cite{ab04}, where it is assumed from the outset that $L(f) \in \Z$. Arkowitz and Brown never seem to use this fact in \cite{ab04}, however, and their proof seems to hold even if we assume only that $L(f) \in \R$.

Our characterization of the Lefschetz number uses essentially the same valuation property as above, but with a more complicated property for determining the value on simplices. In Section \ref{abstractsection} we prove our characterization in the context of simplicial maps on abstract simplicial complexes, and in Section \ref{continuoussection} we generalize to continuous selfmaps of polyhedra, the standard classical setting for the Lefschetz number. In Section \ref{weakensection} we discuss how the typical homotopy invariance axiom can be weakened in our scheme. We show that we need only assume a continuity property for the Lefschetz number, not full homotopy invariance.

The author would like to thank Robert F.\ Brown for many helpful suggestions and conversations.

\section{Simplicial maps on abstract finite simplicial complexes}\label{abstractsection}
Our first characterization of the Lefschetz number is in the setting of maps on abstract simplicial complexes. We begin with some standard definitions.

Let $S$ be a finite set, and $P(S)$ the power set. A \emph{simplicial complex} (or simply \emph{complex}) is a subset $C\subset P(S)$ such that if $x \in C$ and $y\subseteq x$ then $y \in C$. A complex containing exactly one maximal element is a simplex. A one-element subset of $P(S)$ is called an \emph{open simplex}. (Unfortunately, using this terminology, an open simplex is generally not a simplex.) For any $a\in P(S)$, let $\bar a \subset P(S)$ be the simplex whose maximal element is $a$. For a simplex $\bar a$, the \emph{boundary} $\pd \bar a$ is the complex $\bar a - a$.
We will also work with orientations, subdivisions, and chain and homology groups of simplicial complexes, but do not wish to belabor the definitions. For details see \cite{brow71} or \cite{gd03}.

For any complex $X$ we can consider the set $S(X)$ of all subcomplexes $A \subset X$. This is a distributive lattice under union and intersection. A \emph{valuation} $\mu: S(X) \to \R$ is a function satisfying the inclusion-exclusion property $\mu(A \cup B) = \mu(A) + \mu(B) - \mu(A\cap B)$ and $\mu(\emptyset) = 0$. A basic result in the theory of valuations on simplicial complexes is the following, which is found for example as Theorem 3.2.3 of \cite{kr97}:
\begin{thm}\label{simplicesunique}
Any valuation $\mu: S(X) \to \R$ is uniquely determined by its values on simplices $x$ of $X$. The set of these values $\mu(x)$ may be arbitrarily assigned.
\end{thm}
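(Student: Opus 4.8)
The plan is to prove the two assertions of the statement separately: first, that a valuation is determined by its values on simplices, and second, that any prescribed collection of such values is realized by some valuation.

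For uniqueness I would argue by strong induction on the number of open simplices of the subcomplex $A \in S(X)$. If $A = \emptyset$ the valuation axiom gives $\mu(A) = 0$, and the only one-element complex is $\overline{\emptyset}$, which is a simplex, so these cases are immediate. For the inductive step, choose an open simplex $a$ that is maximal in $A$ under inclusion and set $B = A \setminus \{a\}$. The two facts to check — both routine from the definitions — are that $B$ is again a subcomplex of $X$, and that $B \cap \bar a = \partial \bar a$; the latter holds because every proper subset of $a$ lies in $A$, hence in $B$, while $a \notin B$. Since $A = B \cup \bar a$, the valuation axiom yields
\[ \mu(A) = \mu(B) + \mu(\bar a) - \mu(\partial \bar a). \]
Here $\mu(\bar a)$ is a value on a simplex, while $B$ and $\partial \bar a \subseteq B$ each have strictly fewer open simplices than $A$, so their valuations are determined by the inductive hypothesis; hence $\mu(A)$ is determined. (Alternatively, one can expand $A = \bigcup_{a \in A} \bar a$ using the inclusion-exclusion identity for finitely many subcomplexes, itself a consequence of the two-set axiom, and observe that $\bar a \cap \bar b = \overline{a \cap b}$, so every term that appears is the value of $\mu$ on a simplex.)

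For the existence statement, fix an arbitrary assignment of a real number $v_{\bar a}$ to each simplex $\bar a$ of $X$, equivalently to each $a \in X$. The linear system $\sum_{b \subseteq a} \lambda_b = v_{\bar a}$ in the unknowns $\lambda_a$ indexed by $a \in X$ is triangular with respect to inclusion and has unit diagonal, so it has a unique solution $(\lambda_a)_{a \in X}$, given explicitly by Möbius inversion on each Boolean lattice $P(a)$. Define $\mu(A) = \sum_{a \in A} \lambda_a$ for $A \in S(X)$. Then $\mu(\emptyset) = 0$, and for subcomplexes $A, B$ one has $\mu(A \cup B) + \mu(A \cap B) = \mu(A) + \mu(B)$ because every open simplex contributes with the same total multiplicity to the two sides; thus $\mu$ is a valuation. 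Finally $\mu(\bar a) = \sum_{b \subseteq a} \lambda_b = v_{\bar a}$ by construction, so $\mu$ realizes the prescribed values.

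There is no serious obstacle here: the entire content lies in the two small combinatorial observations about removing a maximal open simplex (that $B$ stays a subcomplex and that $B \cap \bar a = \partial \bar a$) and in recognizing the defining system for the weights $\lambda_a$ as triangular. The only points demanding any care are the bookkeeping in the lattice $S(X)$ and the mild conventions surrounding the empty complex and the empty $(-1)$-dimensional simplex $\overline{\emptyset}$.
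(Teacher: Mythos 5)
The paper does not actually prove Theorem \ref{simplicesunique}; it cites it from Klain and Rota's \emph{Introduction to Geometric Probability} (Theorem 3.2.3 there), so there is no in-paper proof to compare against. Your argument is correct and is a standard one. The uniqueness half --- delete a maximal open simplex $a$ from $A$, note that $B = A \setminus \{a\}$ remains a subcomplex with $B \cap \bar a = \pd \bar a$, and induct on the number of open simplices via $\mu(A) = \mu(B) + \mu(\bar a) - \mu(\pd\bar a)$ --- is sound, and your parenthetical alternative (inclusion--exclusion over the closed simplices, using $\bar a \cap \bar b = \overline{a\cap b}$) is the form more commonly seen in print, including in Klain--Rota. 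The existence half, solving the unitriangular system $\sum_{b\subseteq a}\lambda_b = v_{\bar a}$ by M\"obius inversion and setting $\mu(A) = \sum_{a\in A}\lambda_a$, is also correct, and it anticipates precisely the construction the paper later uses in the existence half of Theorem \ref{abstractunique}, where $L_0$ is first declared on open simplices and then extended by $L_0(f,A) = \sum_{\rho\in A} L_0(f,\{\rho\})$. One small bookkeeping mismatch: the paper's implicit convention (visible in its assertion that $\pd x = \emptyset$ when $x$ is a vertex, in the proof of Theorem \ref{abstractunique}) is that complexes do not contain the empty face, so the one-element complexes are the vertices rather than $\overline{\emptyset}$. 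This has no effect on your induction, since the one-element case is in any event subsumed by the inductive step with $B=\emptyset$, but the aside about $\overline{\emptyset}$ does not quite match the paper's usage.
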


For a particular complex $X \subset P(S)$, let $M(X)$ be the set of all pairs $(f,A)$, where $f:X\to X$ is a simplicial map and $A$ is a subcomplex of some subdivision of $X$. For a pair $(f,A) \in M(X)$ with $A$ a subcomplex of the subdivision $X'$, we have  $f_q:C_q(A) \to C_q(X)$, the induced map on $q$-chains. Let $j_A:C_q(X') \to C_q(A)$ be the linearization of the map which is the identity on simplices of $A$ and zero on simplices outside of $A$. Let $s_{A}: C_q(X) \to C_q(X')$ be the linearization of the map which for a simplex $x$ of $X$ gives value 1 for all simplices arising as subdivisions of $x$, and value 0 otherwise.
We define $f_{A,q}: C_q(X') \to C_q(X')$ as the map $s_{A} \circ f_q \circ j_A$.

For a pair $(f,A)\in M(X)$ and a $q$-simplex $x$ of $A$, let $c(f,x)$ be the coefficient on $x$ in the $q$-chain $f_{A,q}(x)$. This quantity has a natural geometric interpretation: in the case where $x$ is a subdivision of $f(x)$, there is an ``orientation'' which is $+1$ or $-1$ depending on how $x$ ``covers itself'' under $f$, and $c(f,x)$ is precisely this orientation. When $x\not\in A$ or $x$ is not a subdivision of $f(x)$, we will have $c(f,x)=0$. The compositions with $j_A$ and $s_A$ in the definition of $f_{A,q}$ allow us to consider the trace $\tr(f_{A,q})$, which is the sum of all orientations for those $q$-simplices of $A$ which cover themselves.

\begin{thm}\label{abstractunique}
There is a unique function $L:M(X) \to \R$ satisfying the following axioms:
\begin{enumerate}
\item (Valuation axiom) Let $A,B$ be subcomplexes of $X$. Then $L(f,\emptyset)= 0$, and 
\[ L(f,A\cup B) = L(f,A) + L(f,B) - L(f,A\cap B). \]
\item (Simplex axiom) Let $x$ be a simplex. Then
\[ L(f,x) = (-1)^{\dim x} c(f, x) + L(f, \pd x). \]
\end{enumerate}
\end{thm}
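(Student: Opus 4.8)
For existence, the plan is to exhibit the obvious candidate and check the two axioms directly. Fix a simplicial map $f$ and set $L(f,A) = \sum_q (-1)^q \tr(f_{A,q})$. Since $c(f,x)$ is by definition the coefficient of $x$ in $f_{A,q}(x)$, i.e. the $(x,x)$ diagonal entry of $f_{A,q}$, and this entry is $0$ whenever $x\notin A$, we have $\tr(f_{A,q}) = \sum_{x} c(f,x)$ over the $q$-simplices $x$ of $A$, so that $L(f,A) = \sum_{x}(-1)^{\dim x}c(f,x)$ with $x$ ranging over all simplices of $A$. From this expression the valuation axiom is immediate: the simplex set of $A\cup B$ is the union of the simplex sets of $A$ and $B$ and that of $A\cap B$ is their intersection, so finite inclusion–exclusion for sums indexed over a finite set gives $L(f,A\cup B) = L(f,A)+L(f,B)-L(f,A\cap B)$, and $L(f,\emptyset)=0$ trivially. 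For the simplex axiom, if $\bar a$ is a simplex with top cell $a$, then the simplices of $\bar a$ are $a$ together with the simplices of $\partial\bar a = \bar a - a$, so the defining sum splits as $(-1)^{\dim a}c(f,a) + L(f,\partial\bar a)$, which is exactly the required identity.

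For uniqueness, suppose $L$ and $L'$ both satisfy the axioms and fix $f$. For each subdivision $X'$ the valuation axiom says $A\mapsto L(f,A)$ and $A\mapsto L'(f,A)$ are valuations on the lattice $S(X')$, so by Theorem \ref{simplicesunique} it suffices to prove $L(f,\bar a)=L'(f,\bar a)$ for every closed simplex $\bar a$ of $X'$. This I would do by induction on $n=\dim\bar a$. When $n=0$ we have $\partial\bar a=\emptyset$, and the simplex axiom gives $L(f,\bar a) = c(f,a) + L(f,\emptyset) = c(f,a)$, and likewise for $L'$. For the inductive step, $\partial\bar a$ is the union of the closed facets of $\bar a$; expanding $L(f,\partial\bar a)$ and $L'(f,\partial\bar a)$ by inclusion–exclusion (legitimate because both functions are valuations) expresses them in terms of the values on the various intersections of these facets, each of which is again a closed simplex of dimension $<n$ and hence covered by the induction hypothesis. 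Therefore $L(f,\partial\bar a)=L'(f,\partial\bar a)$, and the simplex axiom $L(f,\bar a)=(-1)^n c(f,a)+L(f,\partial\bar a)$ forces $L(f,\bar a)=L'(f,\bar a)$. Applying Theorem \ref{simplicesunique} once more yields $L(f,A)=L'(f,A)$ for all $A$, hence $L=L'$.

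I expect the only real points requiring care to be bookkeeping: reading the valuation axiom as applying within each fixed subdivision $X'$ (so that $L(f,-)$ genuinely is a valuation on $S(X')$ and Theorem \ref{simplicesunique} can be invoked there), and observing that an intersection of faces of a simplex is again a face, so that the inclusion–exclusion expansion of $L(f,\partial\bar a)$ only ever involves closed simplices of strictly smaller dimension — this is what makes the induction on dimension go through. The induction on dimension, combined with Theorem \ref{simplicesunique}, is the crux of the argument; the verification that the explicit formula $\sum_x(-1)^{\dim x}c(f,x)$ obeys both axioms is routine once the definitions of $f_{A,q}$ and $c(f,x)$ are unwound.
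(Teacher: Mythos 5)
Your proof is correct and takes essentially the same approach as the paper: the existence argument rests on the same combinatorial formula $L(f,A)=\sum_{x\in A}(-1)^{\dim x}c(f,x)$ (you reach it via the trace, the paper defines it directly on open simplices), and the uniqueness argument is the paper's recursion on dimension made explicit as an induction. The only cosmetic caveat is that in the inductive step the intersections of facets can be empty, which is handled by $L(f,\emptyset)=0$ rather than by the inductive hypothesis.
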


\begin{proof}
For the uniqueness, let $L:M(X) \to \R$ satisfy the above axioms, and fix some pair $(f,A)\in M(X)$ such that $A$ is a subcomplex of the subdivision $X'$. We will show that the value of $L(f,A)$ is uniquely determined by the axioms.

Let $\mu:S(X') \to \R$ be given by $\mu(V) = L(f,V)$. Then $\mu$ is a valuation on the lattice $S(X')$ by the valuation axiom for $L$, and so $\mu$ is uniquely determined by its values on the simplices of $X'$ by Theorem \ref{simplicesunique}. But the values $\mu(x) = L(f,x)$ of $\mu$ on simplices $x$ are specified by the simplex axiom. 

Note that the recursive nature of the simplex axiom always terminates in a finite number of steps, since each step decreases the dimension of complexes being considered, and when $\bar x$ is a vertex we will have $L(f,\pd x) = L(f,\emptyset)$, which is zero by the valuation axiom. Thus all values of $\mu$ on simplices are specified by the axioms, and thus $L(f,A) = \mu(A)$ is uniquely determined for all complexes $A$.

For the existence, consider the function defined on open simplices by $L_0(f,\{a\}) = (-1)^{\dim \bar a} c(f,\bar a)$, and extended to $M(X)$ by decomposing a complex into its open simplices:
\[ L_0(f,A) = \sum_{\rho \in A} L_0(f,\{\rho\}). \]
Defined in this way, $L_0$ gives a map $M(X) \to \R$ which satisfies the valuation axiom. We will show that $L_0$ additionally satisfies the simplex axiom. 

Let $\bar a$ be a simplex. Writing $\bar a$ as a union of its elements and using the valuation axiom we have:
\[
L_0(f,\bar a) = \sum_{\rho \subseteq a} L_0(f,\{\rho\}) = L_0(f,\{a\}) + \sum_{\rho \subsetneq a} L_0(f,\{\rho\}) = L_0(f,\{a\}) + L_0(f,\pd \bar a),
\]
where the last equality holds by the valuation axiom because $\pd \bar a$ is the set of all proper subsets of $a$. By the definition of $L_0$, the above reads
\[ L_0(f,\bar a) = (-1)^{\dim \bar a} c(f,\bar a) + L_0(f,\pd \bar a), \]
and so $L_0$ satisfies the simplex axiom as desired.
\end{proof}

We remark that when $f=\id$ is the identity map, our simplex axiom reduces to the condition from Hadwiger's theorem that $\chi(x)$ is always 1: When $x$ is a simplex of dimension $q$, the complex $\pd x$ is topologically a sphere of dimension $q-1$. Assume by induction on the dimension that $L(\id, \pd x) = \chi(\pd x)$, and so $L(\id,\pd x) = \chi(S^{q-1})$ which is $0$ when $q$ is odd and 2 when $q$ is even. Thus our simplex axiom gives the value $(-1) + 2$ when $q$ is odd, and $1 + 0$ when $q$ is even, and in either case we have $L(\id, x) = 1$.

The proof of Theorem \ref{abstractunique} leads to the familiar formula for the Lefschetz number as a trace in homology:
\begin{cor}\label{trace}
The unique function $L: M(X) \to \R$ satisfying the valuation and simplex axioms is computed by the homological formula:
\[ L(f,A) = \sum_{q=0}^{\dim X} (-1)^q \tr(f_{A,q*}:H_q(X') \to H_q(X')), \]
where $X'$ is the subdivision of which $A$ is a subcomplex, and $H_q$ denotes the simplicial homology group in dimension $q$.
\end{cor}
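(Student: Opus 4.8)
The plan is to start from the closed formula for $L$ obtained in the proof of Theorem~\ref{abstractunique}, reorganize it by dimension into an alternating sum of chain-level traces, and then pass from chains to homology by the algebraic (Hopf) trace formula.

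By the existence half of Theorem~\ref{abstractunique}, the unique $L$ satisfying the two axioms is $L_0$, so for $(f,A)\in M(X)$ with $A$ a subcomplex of the subdivision $X'$ we have
\[ L(f,A)=\sum_{\rho\in A}(-1)^{\dim\bar\rho}\,c(f,\bar\rho)=\sum_{q=0}^{\dim X}(-1)^q\sum_{\rho\in A,\,\dim\rho=q}c(f,\bar\rho), \]
the second equality being just a regrouping of the open simplices of $A$ by dimension. Next I would identify the inner sum with a trace: the endomorphism $f_{A,q}=s_A\circ f_q\circ j_A$ of $C_q(X')$ sends every $q$-simplex of $X'$ not in $A$ to $0$ (it is killed by $j_A$), while for a $q$-simplex $\rho\in A$ the coefficient of $\rho$ in $f_{A,q}(\rho)$ is by definition $c(f,\bar\rho)$. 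Hence $\tr\big(f_{A,q}\colon C_q(X')\to C_q(X')\big)=\sum_{\rho\in A,\,\dim\rho=q}c(f,\bar\rho)$, and therefore
\[ L(f,A)=\sum_{q=0}^{\dim X}(-1)^q\,\tr\big(f_{A,q}\colon C_q(X')\to C_q(X')\big). \]
To finish I would invoke the Hopf trace formula: for a graded chain endomorphism $\phi_\bullet$ of a finite-dimensional chain complex, $\sum_q(-1)^q\tr(\phi_q)=\sum_q(-1)^q\tr(\phi_{q*})$ on homology; applied to $\phi_\bullet=f_{A,\bullet}$ on $C_\bullet(X')$ this turns the chain formula above into the asserted homological one.

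The step that really needs checking — and the one I expect to be the main obstacle — is that $f_{A,\bullet}$ commutes with $\partial$, i.e. is a genuine chain map, so that it acts on $H_\bullet(X')$ and the Hopf formula applies. This is where the bookkeeping built into $j_A$, $s_A$, $f_q$, and the orientation coefficients $c(f,\cdot)$ earns its keep; the cleanest route is probably to recognize $f_{A,\bullet}$ as, or as chain-homotopic to, a composition of chain maps — the subdivision operator $C_\bullet(X)\to C_\bullet(X')$ and the chain map induced by $f$ — both of which commute with $\partial$ and whose signs are exactly the $c(f,\cdot)$'s. It is worth noting that one can get away with less: the alternating trace sum $\sum_q(-1)^q\tr(\phi_q)$ is unchanged if $\phi_\bullet$ is altered by a term of the form $\partial g+g\partial$, so it would suffice that $f_{A,\bullet}$ be homotopic, in this weak sense, to a chain map. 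With that in hand everything else is the elementary regrouping above, and specializing to $A=X'$ recovers the classical expression of $L(f)$ as an alternating sum of traces on homology.
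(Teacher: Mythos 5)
Your proof is essentially the paper's: you regroup the open-simplex formula $L_0$ from the existence half of Theorem~\ref{abstractunique} by dimension, identify each inner sum as $\tr(f_{A,q})$ on chains (using that $j_A$ kills simplices outside $A$ and that the diagonal coefficient on $\rho\in A$ is $c(f,\bar\rho)$ by definition), and pass to homology via the Hopf trace theorem. The one place you go beyond the paper is in flagging that $f_{A,\bullet}$ must be, or be chain-homotopic to, a chain map for $f_{A,q*}\colon H_q(X')\to H_q(X')$ to even be defined — a legitimate concern that the paper itself does not address. Note that your suggested resolution by exhibiting $f_{A,\bullet}$ as a composition of chain maps does not immediately work: the projection $j_A$ is not a chain map when $A$ is a proper subcomplex, since a simplex of $X'$ lying outside $A$ can have boundary faces inside $A$, so $j_A\partial\neq\partial j_A$ on such simplices. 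Your weaker fallback observation — that $\sum_q(-1)^q\tr(\phi_q)$ is invariant under perturbing $\phi_\bullet$ by $\partial g+g\partial$ — is the right tool, though it still leaves the relevant chain homotopy to be produced. Modulo that flagged gap, which is the paper's as much as yours, the argument is correct and identical in route to the paper's own proof.
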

\begin{proof}
By the proof of Theorem \ref{abstractunique}, $L$ can be defined on open simplices and we have $L(f,\{\rho\}) = (-1)^{\dim \bar \rho} c(f,\bar \rho)$, and so, writing $A$ as a union of its elements we obtain:
\begin{align*}
L(f,A) &= \sum_{\rho\in A} L(f,\{\rho\}) = \sum_{\rho \in A} (-1)^{\dim \bar \rho} c(f,\bar \rho) = \sum_{q=0}^{\dim X}  \sum_{\rho \in A, \dim \rho = q}  (-1)^q c(f,\bar \rho) \\
&= \sum_{q=0}^{\dim X} (-1)^q \tr(f_{A,q}:C_q(X) \to C_q(X)).
\end{align*}
By the Hopf Trace Theorem (see \cite{brow71}) the alternating sum of the traces in the chain groups is equal to the alternating sum of the traces in homology, as desired.
\end{proof}

\section{Continuous selfmaps of compact polyhedra}\label{continuoussection}

The uniqueness results for simplicial maps on abstract simplicial complexes extend to the setting of continuous maps on compact polyhedra when we assume an additional homotopy invariance axiom. In this section we will not distinguish between an abstract simplicial complex $X$ and its geometric realization as a compact polyhedron.

By the simplicial approximation theorem, if we have a continuous map $f:X \to X$, there is a subdivision $X'$ of $X$ and a simplicial approximation $f_{X'}:X\to X$ which is the geometric realization of a simplicial map $f':X'\to X$. This subdivision $X'$ and the simplicial approximation $f'$ are not unique, but all these simplicial approximation maps are homotopic to $f$.

Given a compact polyhedron $X$, let $N(X)$ be the set of pairs $(f,A)$, where $A\subset X'$ is a subcomplex of some subdivision $X'$ of $X$ and $f:A\to X$ is a continuous map. We obtain essentially the same uniqueness result in this setting:
\begin{thm}\label{contunique}
There is a unique function $\Lambda:N(X)\to \R$ satisfying the following axioms:
\begin{enumerate}
\item (Valuation axiom) Let $A,B$ be subcomplexes of some common subdivision of $X$. Then $\Lambda(f,\emptyset)=0$, and 
\[ \Lambda(f,A\cup B) = \Lambda(f,A) + \Lambda(f,B) - \Lambda(f,A\cap B). \]
\item (Simplicial map axiom) Let $f$ be a simplicial map, and $x$ be a simplex of some subdivision of $X$. Then
\[ \Lambda(f,x) = (-1)^{\dim x} c(f, x) + \Lambda(f, \pd x), \]
where $c(f,x) \in \{-1,0,1\}$ is defined as in Section \ref{abstractsection}, measuring if and how the simplex $x$ maps onto itself.
\item (Homotopy axiom) Let $f:X\to X$ and $g:X\to X$ be homotopic. Then $\Lambda(f,A) = \Lambda(g,A)$.
\end{enumerate}
\end{thm}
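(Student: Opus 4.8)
The plan is to mirror the proof of Theorem \ref{abstractunique} as closely as possible, using the homotopy axiom only to reduce an arbitrary continuous map to a simplicial one. For \emph{uniqueness}, suppose $\Lambda$ satisfies the three axioms and fix $(f,A) \in N(X)$ with $A \subset X'$. First I would handle the case where $f$ is already simplicial: then, exactly as before, $V \mapsto \Lambda(f,V)$ is a valuation on $S(X')$, so by Theorem \ref{simplicesunique} it is determined by its values on simplices, and the simplicial map axiom pins those down recursively (terminating at vertices, where $\pd x = \emptyset$ and the valuation axiom forces the value $0$). For general continuous $f:A \to X$, the catch is that $A$ need not be all of $X$, so the homotopy axiom does not literally apply to $f$; the natural move is to first note that $\Lambda(f,A)$ is, by the valuation axiom, a sum of the ``open-simplex'' contributions $\Lambda(f,\{\rho\})$, each of which (again by inclusion--exclusion, writing $\bar\rho$ as a union of its elements and peeling off the boundary) equals $\Lambda(f,\bar\rho) - \Lambda(f,\pd\bar\rho)$ and hence depends only on the behavior of $f$ near $\bar\rho$. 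One then extends $f|_{\bar\rho}$ to a continuous selfmap of $X$ (possible since $\bar\rho$ is contractible, or by composing with a retraction), applies the homotopy axiom to replace it by a simplicial approximation $f'$ on a further subdivision, and uses the already-established simplicial case together with the subdivision-invariance that the valuation and simplicial-map axioms jointly encode. This shows every value of $\Lambda$ is forced.

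For \emph{existence}, the candidate is the obvious one: define $\Lambda(f,A) = \sum_{q}(-1)^q \tr(f'_{A,q}: C_q(X'') \to C_q(X''))$ where $f'$ is any simplicial approximation of (an extension of) $f$ on a suitable common subdivision $X''$, equivalently the homological trace formula of Corollary \ref{trace}. The three things to verify are: (i) this is well-defined, i.e.\ independent of the choice of subdivision and simplicial approximation --- this follows because any two simplicial approximations are homotopic and the Hopf trace theorem lets us pass to homology, where homotopic maps induce the same $H_*$-endomorphism, so the alternating trace sum agrees; (ii) it satisfies the valuation axiom --- immediate from Corollary \ref{trace}'s expression of $\Lambda$ as a sum over open simplices, since open-simplex decomposition is additive under $\cup$ with the inclusion--exclusion correction for $\cap$; (iii) it satisfies the simplicial map axiom --- this is the \emph{same} computation as in Theorem \ref{abstractunique} (peel the top open simplex off $\bar a$, recognize the remainder as $\pd \bar a$, and read off $c(f,\bar a)$ as the self-coefficient), and it satisfies the homotopy axiom --- because homotopic $f,g:X\to X$ have simplicial approximations that are mutually homotopic, hence induce the same maps on $H_*$, hence the same alternating trace.

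I expect the main obstacle to be the well-definedness of the existence candidate, i.e.\ carefully checking that the trace formula does not depend on the chosen subdivision $X''$ or the chosen simplicial approximation $f'$, and that it is compatible with restriction to a subcomplex $A$ that lives on a coarser subdivision than the one needed to make $f$ simplicial. The cleanest route is to prove it first for $A = X$ (pure homotopy invariance via Hopf trace, standard) and then bootstrap to subcomplexes $A$ via the open-simplex decomposition, so that all the subdivision bookkeeping is localized to individual simplices $\bar\rho$ and their boundaries, where it reduces to the already-understood abstract simplicial picture of Section \ref{abstractsection}. A secondary subtlety worth a sentence in the writeup is the domain issue: since a pair $(f,A) \in N(X)$ has $f$ defined only on $A$, not on all of $X$, one should either interpret the homotopy axiom as applying after extending over $X$, or --- better --- observe that the uniqueness argument only ever needs the homotopy axiom applied to maps of the form $f|_{\bar\rho}$ suitably extended, so the statement is consistent as given once one fixes the convention that $\Lambda(f,A)$ depends only on the germ of $f$ along $A$.
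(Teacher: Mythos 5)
Your proof is correct and follows essentially the same strategy as the paper's: restrict attention first to the subset of $N(X)$ where $f$ is already simplicial, where uniqueness follows from Theorem \ref{abstractunique} verbatim, then pass to general continuous $f$ via simplicial approximation and the homotopy axiom, and establish existence/well-definedness through the homological trace formula of Corollary \ref{trace}. The one place you diverge is in your caution about the domain of $f$ (since $N(X)$ is stated with $f:A\to X$ while the homotopy axiom speaks of maps $X\to X$) and in your proposal to localize the uniqueness argument to open-simplex contributions so that only germs of $f$ along $\bar\rho$ matter; the paper sidesteps both points by implicitly treating $f$ as defined on all of $X$ (consistent with the earlier definition of $M(X)$) and applying the homotopy axiom globally, so your extra care addresses a real imprecision in the statement but is not a different method.
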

\begin{proof}
Let $N'(X) \subset N(X)$ be the set of pairs $(f,A)$ for which $f$ is simplicial on some subdivision $X'$ of $X$. First we show that there is a unique function $N'(X)\to\R$ satisfying the three axioms above. These elements of $N'(X)$ can be considered as elements of $M(X)$ by simply regarding the abstract simplicial properties of the map and subcomplex. Any function $\lambda:N'(X) \to \R$ which satisfies the axioms of the present theorem will satisfy those of Theorem \ref{abstractunique}, and thus there is a unique function $\lambda:N'(X) \to \R$ satisfying the Valuation and Simplex Axioms. Further, the values of this function $\lambda$ can be computed homologically according to Corollary \ref{trace}, and since the homological trace is homotopy invariant, this $\lambda$ will satisfy the homotopy axiom.

Now we consider the larger set $N(X)$. By the Simplicial Approximation Theorem, for any map $f:X\to X$, there is a subdivision $X'$ of $X$ and a simplicial map $f':X'\to X$ with $f'$ homotopic to $f$. Thus for a pair $(f,A) \in N(X)$, there is a homotopic pair $N(f',A)\in N'(X)$. Any map $\bar\lambda:N(X) \to \R$ satisfying the three axioms must therefore obey $\bar\lambda(f,A) = \lambda(f',A)$, and so the values of $\bar\lambda$ are uniquely determined by the previous paragraph.

For the existence, it remains to check that $\bar\lambda(f,A) = \lambda(f',A)$ is well-defined, that is, that the value $\lambda(f',A)$ does not depend on the choice of the simplicial approximation. This follows from Corollary \ref{trace} which gives a homological formula for $\lambda$ which is homotopy invariant and independent of choices of simplicial structure.
\end{proof}

The normalization axiom above can be weakened if we use the Hopf Construction (see \cite{brow71}) in place of the Simplicial Approximation Theorem. According to the Hopf Construction, any continuous map $f:X\to X$ has a simplicial approximation $f:X' \to X$ for some subdivision $X'$ of $X$ with the property that $f$ has fixed points only inside maximal simplices. We call an simplicial map with all fixed points in maximal simplices a \emph{Hopf simplicial map}.

\begin{thm}\label{hopfunique}
There is a unique function $\Lambda:N(X)\to \R$ satisfying the following axioms:
\begin{enumerate}
\item (Valuation axiom) Let $A,B$ be subcomplexes of some common subdivision of $X$. Then $\Lambda(f,\emptyset)=0$, and 
\[ \Lambda(f,A\cup B) = \Lambda(f,A) + \Lambda(f,B) - \Lambda(f,A\cap B). \]
\item (Hopf simplicial map axiom) Let $f$ be a Hopf simplicial map and let $x$ be a simplex. If $x$ is not a maximal simplex we have $\Lambda(f,x) = 0$, and if $x$ is a maximal simplex we have 
\[ \Lambda(f,x) = (-1)^{\dim X} c(f, x). \]
\item (Homotopy axiom) Let $f:X\to X$ and $g:X\to X$ be homotopic. Then $\Lambda(f,A) = \Lambda(g,A)$.
\end{enumerate}
\end{thm}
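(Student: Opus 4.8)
The plan is to mimic the proof of Theorem~\ref{contunique}, using the Hopf Construction in place of the Simplicial Approximation Theorem and a direct computation in place of the appeal to the Simplex Axiom. First I would show that the three axioms determine $\Lambda$ uniquely on the subset of pairs $(f,A)\in N(X)$ for which $f$ is a Hopf simplicial map on some subdivision $X'$ of $X$ (these pairs being regarded abstractly, exactly as in the proof of Theorem~\ref{contunique}); then I would identify the determined value with the classical homological Lefschetz trace, which is homotopy invariant and independent of all choices; the passage to arbitrary $(f,A)$ is then immediate from the homotopy axiom.

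For a Hopf simplicial pair $(f,A)$ with $A\subseteq X'$, the valuation axiom makes $V\mapsto\Lambda(f,V)$ a valuation on the lattice $S(X')$, so by Theorem~\ref{simplicesunique} it is determined by its values on the simplices of $X'$; and the Hopf simplicial map axiom prescribes those values outright ($0$ on non-maximal simplices, $(-1)^{\dim X}c(f,x)$ on maximal ones). Hence $\Lambda$ is unique on such pairs. Unwinding the valuation through the open-simplex decomposition as in the existence half of Theorem~\ref{abstractunique} — the vanishing on non-maximal simplices forces the open-simplex value of every non-maximal simplex to be $0$, while a maximal simplex has only non-maximal proper faces and so retains open-simplex value $(-1)^{\dim X}c(f,\bar\rho)$ — one obtains the closed formula
\[ \Lambda(f,A)=(-1)^{\dim X}\sum_{\rho\in A,\ \bar\rho\text{ maximal}}c(f,\bar\rho). \]

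Next I would identify this with the homological Lefschetz number. Since a Hopf simplicial map is in particular simplicial, Corollary~\ref{trace} applies to $(f,A)$ regarded in $M(X)$ and gives $\sum_{q}(-1)^q\tr(f_{A,q*}:H_q(X')\to H_q(X'))=\sum_{\rho\in A}(-1)^{\dim\bar\rho}c(f,\bar\rho)$. The crucial point is that for a Hopf simplicial map $c(f,\bar\rho)=0$ unless $\bar\rho$ is maximal: if $c(f,\bar\rho)\neq0$ then $f$ carries $\bar\rho$ affinely isomorphically onto a simplex containing $\bar\rho$ of the same dimension, so $\bar\rho$ contains a fixed point of $f$; as $f$ is Hopf this fixed point lies in a maximal simplex, and the only face of $\bar\rho$ that can be maximal is $\bar\rho$ itself, so $\bar\rho$ is maximal. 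Using this, together with the fact that maximal simplices have dimension $\dim X$ (so that $(-1)^{\dim\bar\rho}$ may be replaced by $(-1)^{\dim X}$ — here one restricts to pure complexes), the homological sum collapses to exactly the formula above. Thus on Hopf simplicial pairs the value forced by the present axioms equals the homological trace, which by the Hopf Trace Theorem is homotopy invariant and independent of the simplicial structure.

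Finally I would pass to all of $N(X)$. By the Hopf Construction every self-map of $X$ is homotopic to a Hopf simplicial map, so each pair $(f,A)\in N(X)$ is homotopic to a Hopf simplicial pair $(f',A)$; the homotopy axiom then forces $\Lambda(f,A)=\Lambda(f',A)$, already determined, giving uniqueness. For existence one sets $\Lambda(f,A)=\Lambda(f',A)$, which is well-defined because that value is the homological Lefschetz trace, independent of the choice of $f'$ and of the subdivision; the valuation and Hopf simplicial map axioms are inherited from the simplicial case, and the homotopy axiom holds because homotopic maps have homotopic Hopf simplicial approximations and hence the same homological trace. I expect the real work to lie in the third paragraph — showing that the chain-level trace of a Hopf simplicial map is carried entirely by the top-dimensional maximal simplices — which rests on the elementary but essential fact that a simplex mapped onto a simplex containing itself must contain a fixed point, combined with the precise output of the Hopf Construction; the book-keeping about non-pure complexes (implicit in the sign $(-1)^{\dim X}$) is a minor additional wrinkle.
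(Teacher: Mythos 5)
Your proposal is correct and takes essentially the same route as the paper: both rest on the observation (via the Brouwer Fixed Point Theorem) that a Hopf simplicial map has $c(f,y)=0$ for every non-maximal simplex $y$, so the Hopf simplicial map axiom determines the same values on simplices as the simplicial map axiom of Theorem \ref{contunique}, after which the homotopy axiom and the Hopf Construction finish the argument exactly as before. The paper is slightly more economical in organization — it verifies only that the two normalization axioms are equivalent for Hopf simplicial maps and then invokes the proof of Theorem \ref{contunique} wholesale, whereas you re-derive the closed formula through Theorem \ref{simplicesunique} and Corollary \ref{trace} — but the substance and the key vanishing lemma are the same. One remark: your parenthetical about restricting to pure complexes is more than a minor wrinkle. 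The paper's proof also relies implicitly on purity when it says ``since $x$ is maximal, $\dim x = \dim X$,'' and if $X$ has maximal simplices of dimension strictly less than $\dim X$ that carry fixed points of the Hopf approximation (so $c(f,x)\neq 0$ there), the sign $(-1)^{\dim X}$ in the Hopf simplicial map axiom genuinely disagrees with the correct sign $(-1)^{\dim x}$. You were right to flag it; it is an implicit hypothesis needed by both your argument and the paper's.
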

\begin{proof}
Since any map is homotopic to a Hopf simplicial map, the proof of Theorem \ref{contunique} will apply exactly, using the Hopf construction in place of the Simplicial Approximation Theorem. We need only show that the formula in the simplicial map axiom of Theorem \ref{contunique} is equivalent to the Hopf simplicial map axiom when $f$ is a Hopf simplicial map. Specifically we must show that when $f$ is Hopf simplicial and $x$ is not a maximal simplex we will have $(-1)^{\dim x} c(f, x) + \Lambda(f, \pd x) = 0$ and when $x$ is a maximal simplex we will have $(-1)^{\dim x} c(f, x) + \Lambda(f, \pd x) = (-1)^{\dim X} c(f, x)$.

When $f$ is Hopf simplicial and $x$ is not a maximal simplex, $f$ will have no fixed points on $x$. Then it is impossible that $x$ is a subdivision of $f(x)$, otherwise there would be a fixed point in $x$ by the Brouwer Fixed Point Theorem. Thus $c(f,x)$ must be zero. Similarly there are no fixed points in the boundary $\pd x$, and by the same argument we will have $c(f,y) = 0$ for any simplex $y$ of $\pd x$. Thus $\Lambda(f,\pd x) = 0$ and so $(-1)^{\dim x} c(f, x) + \Lambda(f, \pd x) = 0$ as desired.

When $x$ is a maximal simplex, still we have no fixed points on the boundary and by the argument above we have $\Lambda(f,\pd x) = 0$. Since $x$ is maximal, $\dim x = \dim X$. Thus $\Lambda(f,x) = (-1)^{\dim X}c(f,x)$ as desired.
\end{proof}

\section{Weakening the homotopy axiom}\label{weakensection}
The proof of Theorem \ref{contunique} suggests that perhaps the homotopy axiom could be weakened or even removed entirely: any function satisfying the axioms of Theorem \ref{abstractunique} must already agree with the homological trace formula of Corollary \ref{trace}, which itself is already homotopy invariant. We cannot simply discard the homotopy axiom however, because not all maps of $N(X)$ will be simplicial on some subdivision of $X$. For such a map the [Hopf] simplicial map axiom simply does not apply, and so the valuation and [Hopf] simplicial map axioms will not suffice to uniquely define the value of $\Lambda$. 

Nonetheless the usage of the homotopy axiom in the proof of Theorems \ref{contunique} and \ref{hopfunique} is still fairly specific, the axiom only being invoked when passing from a map to its [Hopf] simplicial approximation. These approximations can be accomplished using arbitrarily small homotopies.

In fact, the set of all Hopf simplicial maps forms a dense subset of the space $X^X$ of all selfmaps of $X$, when this space is given a natural topology in which homotopies of maps correspond to paths in $X^X$ (for example the compact-open topology). Since the first two axioms used in Theorem \ref{hopfunique} suffice to define $\Lambda$ uniquely on the dense set of Hopf simplicial maps, we need only assume continuity of $\Lambda$ in $f\in X^X$ to obtain a unique invariant on all maps. Thus we can weaken the homotopy axiom to a continuity axiom.

\begin{thm}\label{continuityaxiom}
There is a unique function $\Lambda:N(X)\to \R$ satisfying the following axioms:
\begin{enumerate}
\item (Valuation axiom) Let $A,B$ be subcomplexes of some common subdivision of $X$. Then $\Lambda(f,\emptyset)=0$, and 
\[ \Lambda(f,A\cup B) = \Lambda(f,A) + \Lambda(f,B) - \Lambda(f,A\cap B). \]
\item (Hopf normalization axiom) Let $f$ be a Hopf simplicial map on a subdivision $X'$, and $x$ be a simplex of $X'$. If $x$ is not a maximal simplex we have $\Lambda(f,x) = 0$, and if $x$ is a maximal simplex we have 
\[ \Lambda(f,x) = (-1)^{\dim X} c(f, x). \]
\item (Continuity axiom) $\Lambda(f,A)$ depends continuously on $f\in X^X$. 
\end{enumerate}
\end{thm}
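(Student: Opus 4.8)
The plan is to split into existence and uniqueness, using Theorem~\ref{hopfunique} as the engine and replacing its homotopy axiom by a density argument. For existence I would take $\Lambda$ to be the Lefschetz number, that is, the unique function produced by Theorem~\ref{hopfunique}. It satisfies the valuation axiom, and its Hopf simplicial map axiom literally contains the Hopf normalization axiom stated here (the present version only demands the formula on simplices of the very subdivision on which $f$ is Hopf simplicial, so it is formally weaker). What must be checked is the continuity axiom. Here I would use that a compact polyhedron $X$ is an ANR, so that in the compact-open topology every $f\in X^X$ has a neighborhood $U$ consisting of maps homotopic to $f$; by homotopy invariance of the Lefschetz number (Theorem~\ref{hopfunique}, or the homological formula of Corollary~\ref{trace}), the function $g\mapsto\Lambda(g,A)$ is constant on $U$. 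Being also integer-valued, it is locally constant, hence continuous, for each fixed $A$.

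For uniqueness, suppose $\Lambda$ satisfies the three axioms. The first point is that the valuation and Hopf normalization axioms already pin down $\Lambda(g,A)$ whenever $g$ is Hopf simplicial on a subdivision $X'$ of which $A$ is a subcomplex: the Hopf normalization axiom prescribes $\Lambda(g,x)$ for every simplex $x$ of $X'$, and then Theorem~\ref{simplicesunique} together with the valuation axiom force the value on every subcomplex of $X'$ --- exactly the computation already carried out in the proofs of Theorems~\ref{contunique} and~\ref{hopfunique}. The second point is the density statement announced before the theorem: by the Hopf construction, every continuous selfmap of $X$ lies within an arbitrarily small distance of a Hopf simplicial map, so the Hopf simplicial maps form a dense subset of $X^X$. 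Combining the two, for a fixed second argument $A$ the continuous function $g\mapsto\Lambda(g,A)$ agrees on a dense set with the Lefschetz number of the existence part; since two continuous $\R$-valued functions agreeing on a dense set are equal, $\Lambda$ is forced to coincide with the Lefschetz number on all of $N(X)$.

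The step I expect to require the most care is the density claim together with its interaction with the bookkeeping of subdivisions. The ordinary Simplicial Approximation Theorem already gives simplicial approximations $C^0$-close to a given map once the subdivision is fine enough; one must verify that the additional adjustment in the Hopf construction --- which slides fixed points off the lower skeleta and into the interiors of maximal simplices --- can likewise be realized by an arbitrarily small motion of points. A secondary, more delicate matter is that, in the presence of a prescribed subcomplex $A$, one needs the approximating Hopf simplicial map to be governed by the Hopf normalization axiom on a structure compatible with $A$, so that the first point of the uniqueness argument actually applies to the pair $(g,A)$; this is the same subdivision-compatibility issue already resolved in the proof of Theorem~\ref{contunique}, and granting it the argument reduces to the two observations above.
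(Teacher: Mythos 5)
Your argument follows the same outline the paper sketches in the paragraph preceding Theorem \ref{continuityaxiom}: density of Hopf simplicial maps in $X^X$, the valuation and Hopf normalization axioms pinning down $\Lambda$ on that dense set via Theorem \ref{simplicesunique}, and continuity extending the determination to all maps. The paper leaves this theorem without a formal proof environment, so the details you supply --- the ANR/local-constancy argument for continuity of the Lefschetz number in the existence direction, and the explicit flagging of the subdivision-compatibility issue between $A$ and the subdivision on which the approximating map is Hopf simplicial --- are precisely the points the paper's brief discussion takes for granted, and they fit the intended argument.
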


For a real-valued function $\Lambda$, this continuity axiom is considerably weaker than homotopy invariance, which requires that $\Lambda(f, A)$ is constant on path components of $X^X$. (If $\Lambda$ is assumed from the outset to be integer valued, this distinction vanishes.) Ours is the first axiom scheme for the Lefschetz number which weakens the homotopy invariance assumption. Because the arguments used in \cite{fps04} are generally based on transversality, we conjecture that the homotopy axiom in that setting (and that of \cite{gs12}) could be similarly weakened. This may be a bit more difficult in the approach of \cite{ab04}, where ``large'' homotopies are used.

We note in conclusion that it should be straightforward to generalize this paper's result to a characterization of the Reidemeister trace as in \cite{stae09b} by replacing $c(f,x)$ throughout by an appropriate twisted conjugacy class with sign given by $c(f,x)$. Generalizing to coincidence theory on orientable spaces seems nontrivial, however, since it is not clear what to use as a substitute for $c(f,x)$. The Lefschetz coincidence number involves Poincar\'e duality, which is nontrivial to mimic with a pair of simplicial maps. 

\bibliographystyle{hplain}

\begin{thebibliography}{10}

\bibitem{ab04}
M.~Arkowitz and R.~Brown.
\newblock The {L}efschetz-{H}opf theorem and axioms for the {L}efschetz number.
\newblock {\em Fixed Point Theory and Applications}, 1:1--11, 2004.

\bibitem{brow71}
R.~Brown.
\newblock {\em The {L}efschetz Fixed Point Theorem}.
\newblock Scott, Foresman and Company, 1971.

\bibitem{fps04}
M.~Furi, M.~P. Pera, and M.~Spadini.
\newblock On the uniqueness of the fixed point index on differentiable
  manifolds.
\newblock {\em Fixed Point Theory and Applications}, 4:251--259, 2004.

\bibitem{gs12}
D.~L. Gon\c{c}alves and P.~C. Staecker.
\newblock Axioms for the coincidence index of maps between manifolds of the
  same dimension.
\newblock {\em Topology and Its Applications}, 159:2670--3676, 2012.
\newblock arxiv eprint 1102.1223.

\bibitem{gw06}
D.~L. Gon\c{c}alves and J.~Weber.
\newblock Axioms for the equivariant {L}efschetz number and for the
  {R}eidemeister trace.
\newblock {\em Journal of Fixed Point Theory and Applications}, 2:55--72, 2007.

\bibitem{gd03}
A.~Granas and J.~Dugundji.
\newblock {\em Fixed Point Theory}.
\newblock Springer, 2003.

\bibitem{hadw57}
H.~Hadwiger.
\newblock {\em Vorlesungen Ÿber {I}nhalt, {O}berfl{\"a}che und
  {I}soperimetrie}.
\newblock Springer, 1957.

\bibitem{klai95}
D.~Klain.
\newblock A short proof of {H}adwiger's characterization theorem.
\newblock {\em Mathematika}, 42:329--339, 1995.

\bibitem{kr97}
D.~Klain and G.~Rota.
\newblock {\em Introduction to Geometric Probability}.
\newblock Cambridge, 1997.

\bibitem{pedr12}
A.~Pedrini.
\newblock The {E}uler characteristic of a polyhedron as a valuation on its
  coordinate vector lattice.
\newblock 2012.
\newblock arxiv eprint 1008.2154.

\bibitem{stae07a}
P.~C. Staecker.
\newblock On the uniqueness of the coincidence index on orientable
  differentiable manifolds.
\newblock {\em Topology and Its Applications}, 154:1961--1970, 2007.
\newblock arxiv eprint math.GN/0607751.

\bibitem{stae09b}
P.~C. Staecker.
\newblock Axioms for a local {R}eidemeister trace in fixed point and
  coincidence theory on differentiable manifolds.
\newblock {\em Journal of Fixed Point Theory and Applications}, 5:237--247,
  2009.
\newblock arxiv eprint 0704.1891.

\bibitem{watt62}
C.~Watts.
\newblock On the {E}uler characteristic of polyhedra.
\newblock {\em Proceedings of the American Mathematical Society}, 13:304--306,
  1962.

\end{thebibliography}

\end{document}